\renewcommand*{\backref}[1]{}
\renewcommand*{\backrefalt}[4]{%
    \ifcase #1 (Not cited.)%
    \or        (p.\,#2)%
    \else      (pp.\,#2)%
    \fi}
\begin{document}

\newtheorem{theorem}{Theorem}
\newtheorem{lemma}[theorem]{Lemma}
\newtheorem{claim}[theorem]{Claim}
\newtheorem{cor}[theorem]{Corollary}
\newtheorem{prop}[theorem]{Proposition}
\newtheorem{definition}{Definition}
\newtheorem{question}[theorem]{Open Question}
\newtheorem{conj}[theorem]{Conjecture}
\newtheorem{prob}{Problem}
\newtheorem{algorithm}[theorem]{Algorithm}

\def\squareforqed{\hbox{\rlap{$\sqcap$}$\sqcup$}}
\def\qed{\ifmmode\squareforqed\else{\unskip\nobreak\hfil
\penalty50\hskip1em\nobreak\hfil\squareforqed
\parfillskip=0pt\finalhyphendemerits=0\endgraf}\fi}

\numberwithin{equation}{section}
\numberwithin{theorem}{section}

\def\cA{{\mathcal A}}
\def\cB{{\mathcal B}}
\def\cC{{\mathcal C}}
\def\cD{{\mathcal D}}
\def\cE{{\mathcal E}}
\def\cF{{\mathcal F}}
\def\cG{{\mathcal G}}
\def\cH{{\mathcal H}}
\def\cI{{\mathcal I}}
\def\cJ{{\mathcal J}}
\def\cK{{\mathcal K}}
\def\cL{{\mathcal L}}
\def\cM{{\mathcal M}}
\def\cN{{\mathcal N}}
\def\cO{{\mathcal O}}
\def\cP{{\mathcal P}}
\def\cQ{{\mathcal Q}}
\def\cR{{\mathcal R}}
\def\cS{{\mathcal S}}
\def\cT{{\mathcal T}}
\def\cU{{\mathcal U}}
\def\cV{{\mathcal V}}
\def\cW{{\mathcal W}}
\def\cX{{\mathcal X}}
\def\cY{{\mathcal Y}}
\def\cZ{{\mathcal Z}}

\def\fA{{\mathfrak A}}
\def\fJ{{\mathfrak J}}

\def\sssum{\mathop{\sum\!\sum\!\sum}}
\def\ssum{\mathop{\sum\ldots \sum}}

\def\Xm{\cX_m}

\def \C {{\mathbb C}}
\def \F {{\mathbb F}}
\def \L {{\mathbb L}}
\def \K {{\mathbb K}}
\def \N {{\mathbb N}}
\def \R {{\mathbb R}}
\def \Q {{\mathbb Q}}
\def \Z {{\mathbb Z}}

\def\barG{\overline{\cG}}
\def\\{\cr}
\def\({\left(}
\def\){\right)}
\def\fl#1{\left\lfloor#1\right\rfloor}
\def\rf#1{\left\lceil#1\right\rceil}

\newcommand{\pfrac}[2]{{\left(\frac{#1}{#2}\right)}}

\def\rem{\mathrm{\, rem~}}

\def \Prob{{\mathrm {}}}
\def\e{\mathbf{e}}
\def\ep{{\mathbf{\,e}}_p}
\def\epp{{\mathbf{\,e}}_{p^2}}
\def\er{{\mathbf{\,e}}_r}
\def\eM{{\mathbf{\,e}}_M}
\def\eps{\varepsilon}
\def\Res{\mathrm{Res}}
\def\vec#1{\mathbf{#1}}

\def \li {\mathrm {li}\,}

\def\ip{\overline p}
\def\ipd{\ip_d}
\def\iq{\overline q}

\def\e{{\mathbf{\,e}}}
\def\ep{{\mathbf{\,e}}_p}
\def\em{{\mathbf{\,e}}_m}

\def\mand{\qquad\mbox{and}\qquad}

\newcommand{\commF}[1]{\marginpar{%
\begin{color}{red}
\vskip-\baselineskip 
\raggedright\footnotesize
\itshape\hrule \smallskip F: #1\par\smallskip\hrule\end{color}}}

\newcommand{\commI}[1]{\marginpar{%
\begin{color}{blue}
\vskip-\baselineskip 
\raggedright\footnotesize
\itshape\hrule \smallskip I: #1\par\smallskip\hrule\end{color}}}

\title[Products of primes  in 
progressions]{On short  products  of primes in arithmetic 
progressions}
\date{\today}

\author{Igor E.  Shparlinski}

\address{Department of Pure Mathematics, University of New South Wales\\
2052 NSW, Australia.}

\email{igor.shparlinski@unsw.edu.au}

\subjclass[2010]{Primary 11N25; Secondary 11B25, 11L07, 11N36}
 \keywords{residue classes, primes,  sieve method, exponential sums}

\begin{abstract} We  give  several families of reasonably small integers $k, \ell \ge 1$  
and real positive $\alpha, \beta \le 1$, such that
the products  $p_1\ldots p_k s$, where $p_1, \ldots, p_k \le m^\alpha$ are primes and $s \le m^\beta$ is a product of at most 
$\ell$ primes,  represent all reduced residue classes modulo $m$. 
This is a relaxed version of 
the still open question of P.~Erd{\H o}s, A.~M.~Odlyzko and A.~S{\'a}rk{\"o}zy  (1987), 
that corresponds to $k = \ell =1$ (that is, to products of two primes).  In particular,  we improve 
recent results of A.~Walker~(2016).
\end{abstract}
 
\maketitle

\section{Introduction}


Since our knowledge of distribution of primes in short arithemetic 
progressions is rather limited, it is certainly interesting to consider various 
modifications and relaxations of this question. In particular, as one of such relaxations, 
Erd{\H o}s, Odlyzko and S{\'a}rk{\"o}zy~\cite{EOS}
have introduced a question about the distribution of products 
of two small primes in residue classes.  Namely, given an integer 
$m\ge 1$, Erd{\H o}s, Odlyzko and S{\'a}rk{\"o}zy~\cite{EOS}
ask whether all reduced classes $a$ modulo $m$
can be represented as the product 
\begin{equation}
\label{eq:congr}
p_1p_2\equiv a \pmod m
\end{equation}
 of two primes $p_1,p_2\le m$
and prove   a series of conditional results towards this 
under various assumptions about the zero-free regions for
Dirichlet $L$--functions. However, it appears that even the Extended
Riemann Hypothesis  is  not powerful enough to answer the original 
question.

Some more accessible relaxations of this problem 
have been introduced  and studied by Friedlander,   Kurlberg and  Shparlinski~\cite{FrKuSh}, where, in particular, 
the congruence~\eqref{eq:congr}  is considered on average over $a$ and $m$. 
Furthermore, one can find in~\cite{FrKuSh} some results on several ternary modifications of~\eqref{eq:congr} 
such as the congruences
\begin{equation}
\label{eq:3 primes}
p_1(p_2+p_3) \equiv a \pmod m \quad \text{and}\quad p_1p_2(p_3+h) \equiv a \pmod m,
\end{equation}
where $p_1,p_2,p_3 \le x$ are primes and $h$ is a fixed integer. Recently the results
of~\cite{FrKuSh} about the congruences~\eqref{eq:3 primes}
have been improved by Garaev~\cite{Gar1,Gar2}. Furthermore, 
the congruence
$$
p_1p_2+p_2p_3+ p_1 p_3  \equiv a \pmod m
$$
with primes $p_1,p_2,p_3 \le x$ has been studied in~\cite{Bak,FoSh}, with some applications
to the size of largest prime divisor of the bilinear quadratic form $p_1p_2+p_2p_3+ p_1 p_3$. 

Yet another relaxation of the original question 
of~\cite{EOS}  have been introduced in~\cite{Shp},  where one of the components 
of the product on the left hand side is prime and the other one is
almost prime (that is, a product of a small number of primes. 

More precisely, or an integer $\ell \ge 1$ we use $\cP_\ell$ to denote the set of 
integers that are products of at most $\ell$ primes. Thus $\cP=\cP_1$ is the set 
of primes.

Now. for some real positive $x,y \le m$ and integer $\ell \ge 1$, 
we consider the congruence
\begin{equation}
\label{eq:p Pl}
p s \equiv a \pmod m, \quad p \in \cP  \cap [1,x], \  s \in \cP_\ell \cap [1,y], 
\end{equation}
that is. with variables   $p \le x$ which is prime and $1 \le s \le y$ which is a product of at most $\ell$ primes,

\begin{definition}
\label{def:triple} 
We say that a triple $(\ell; \alpha, \beta) \in \N \times \R^2$ is {\it admissible\/}, 
if for any fixed $\varepsilon > 0$ and 
$$x = m^{\alpha+\varepsilon} \mand y = m^{\beta+\varepsilon}
$$ 
the congruence~\eqref{eq:p Pl}
has a solution for any reduced residue class $a$ modulo $m$, provided that $m$ is
large enough, and we denote by $\fA_3$ the set of admissible triples. 
\end{definition}

Thus  the question of~\cite{EOS} is equivalent, apart from the presence of $\varepsilon$,
 to proving that $(1;1,1)$ is an
admissible triple, which seems to be out of reach nowadays.
However,  some families of admissible triples, have been given in~\cite[Theorem~3]{Shp}.
In particular,  it is observed  in~\cite[Section~4]{Shp}
that 
\begin{equation}
\label{eq: 17 0.997}
(17;0.997,0.997) \in \fA_3.
\end{equation}

Walker~\cite{Wal} has recently considered a different variant of this question
and asked about the solvability of the congruence 
\begin{equation}
\label{eq: k primes}
p_1\ldots p_k \equiv a \pmod m, \qquad p_1,\ldots, p_k \in \cP  \cap [1,x], 
\end{equation}
that is, where $p_1,\ldots, p_k \le x$ are primes. 

\begin{definition}
\label{def:pair} 
We say that a pair $(k; \alpha) \in \N \times \R$ is {\it admissible\/}, 
if for any fixed $\varepsilon > 0$ and 
$$
x = m^{\alpha+\varepsilon}  
$$    
the congruence~\eqref{eq: k primes}
has a solution for any reduced residue class $a$ modulo $m$, provided that $m$ is
large enough, and we denote by $\fA_2$ the set of admissible pairs.
\end{definition}

Thus in these settings, the question of~\cite{EOS} is equivalent to proving that $(2;1)$ is 
admissible (again, apart from the presence of $\varepsilon$).

We also introduce a similar definition with respect to prime moduli
\begin{definition}
\label{def:pair Prime} 
We say that a pair $(k; \alpha) \in \N \times \R$ is {\it admissible for primes\/}, 
if for any fixed $\varepsilon > 0$ and 
$$
x = m^{\alpha+\varepsilon}  
$$   
the congruence~\eqref{eq: k primes}
has a solution for any reduced residue class $a$ modulo $m$, provided that $m$ is prime
and large enough, and we denote by $\fA_2^\sharp$ the set of admissible for primes pairs.
\end{definition}

Walker~\cite[Theorem~2]{Wal} has shown that 
\begin{equation}
\label{eq: 6 15/16}
(6; 15/16)  \in \fA_2^\sharp,
\end{equation}
(note that  $15/16 = 0.9375\ldots $), as well as that for any $\eta> 0$
there is an integer $k_\eta$, for which 
\begin{equation}
\label{eq: k0 3/4}
(k_\eta; 3/4+\eta)  \in \fA_2^\sharp,
\end{equation}

However, we note that the claim made in~\cite{Wal} that~\eqref{eq: 6 15/16} is an improvement
over~\eqref{eq: 17 0.997} does not seem to be justified. 
Even ignoring the difference between  
arbitrary and prime moduli $m$, which distinguishes~\eqref{eq: 17 0.997} and~\eqref{eq: 6 15/16}, 
we note that while 
\begin{equation}
\label{eq: Impl3-2}
(\ell; \alpha,  \alpha)   \in \fA_3 \  \Longrightarrow \ (\ell+1; \alpha)   \in \fA_2, 
\end{equation}
the opposite implication is not clear and most likely to be false. 

The main goal of this work is to show that there is an alternative and more efficient approach 
to producing pairs  $(k;\alpha) \in \fA_2 $ with reasonably small $k$ and $\alpha$. 
In particular, we obtain a series of improvements of~\eqref{eq: 6 15/16} and~\eqref{eq: k0 3/4},
see Section~\ref{sec: num ex} for the numerical values.

In fact, given some real positive $x,y \le m$ and an integer $\ell \ge 1$,
 we consider a more general congruence, which includes~\eqref{eq:p Pl} and~\eqref{eq: k primes}
 as special cases: 
\begin{equation}
\label{eq: k l}
p_1\ldots p_k s\equiv a \pmod m, \quad p_1,\ldots, p_k \in \cP  \cap [1,x], \ 
s \in \cP_\ell \cap [1,y], 
\end{equation}
that is,  with  variables 
 $p_1,\ldots, p_k \le x$ which are and $1 \le s \le y$ which  is a product of at most $\ell$ primes.

\begin{definition}
\label{def:quadr} 
We say that a quadruple  $(k,\ell; \alpha, \beta) \in \N^2 \times \R^2$ is {\it admissible\/}, 
if for any fixed $\varepsilon > 0$ and 
$$x = m^{\alpha+\varepsilon} \mand y = m^{\beta+\varepsilon}
$$  the congruence~\eqref{eq:p Pl}
has a solution for any reduced residue class $a$ modulo $m$, provided that $m$ is
large enough, and we denote by $\fA_4$ the set of admissible quadruples. 
\end{definition}

Then aforementioned improvements of~\eqref{eq: 6 15/16} follow as special cases 
from the obvious analogue of~\eqref{eq: Impl3-2} that 
\begin{equation}
\label{eq: Impl4-2}
(k,\ell; \alpha,  \alpha) \in \fA_4 \  \Longrightarrow \ (k+\ell; \alpha)   \in \fA_2.
\end{equation}

Our method is based on  bounds of some exponential sums with reciprocals of 
primes. 
These bounds are then
coupled with the sieve method in the form given by  
Greaves~\cite[Section~5]{Gre2}.

In particular, we use this opportunity to improve slightly the result of~\cite{Shp}
about admissible triples
via a more careful choice of parameters and then we introduce  a new argument
which allows us to produce a large family of  admissible quadruples.
In turn, using~\eqref{eq: Impl4-2} we 
significantly improve the results of Walker~\cite{Wal}. 
For example, we replace $1/4$ with $1/2$ in~\eqref{eq: k0 3/4}, 
and extend it to composite moduli, see~\eqref{eq: k0 1/2}  below. 

Finally, we remark that here all elements of the product are less than the modulus, that is, 
we always have $\alpha, \beta \le 1$. For products of large primes one can achieve more, 
and for example by a result of Ramar{\'e} and   Walker~\cite{RaWa} every reduced class modulo $m$ 
can be represented by a product of three primes $p_1, p_2, p_3 \le m^4$ (provided that $m$ is large enough).

\section{Main result and its implications}
\label{sec:main}

Following the results of Greaves~\cite[Equation~(1.4)]{Gre1}, see also~\cite[pp.~174--175]{Gre2}, 
we also define the constants
\begin{equation}
\label{eq:delta234}
\delta_2 = 0.044560, \qquad \delta_3 = 0.074267, 
\qquad \delta_4 = 0.103974, 
\end{equation}
and, after rounding up, 
$$
\delta_\ell = 0.124821, \qquad \ell \ge 5.
$$
We also define 
$$
\vartheta_\ell = \ell - \delta_\ell, \qquad \ell =2,3, \ldots, 
$$

First we prove the following general statement. 

\begin{theorem}\label{thm:pPkPl}
For any fixed real  $\alpha \ge 1/2$ and $\beta \ge 0$  and  any integer $\ell\ge 2$:
\begin{itemize}
\item[(i)]
if $\alpha/16+  \beta > 1$ and $\alpha/3+ \beta > 5/4$
and 
$$
  \max\left\{ 
\frac{\beta}{\alpha/16+  \beta - 1},  \frac{\beta}{\alpha/3+ \beta -5/4}\right\}  \le \vartheta_\ell 
$$
then we have 
$(\ell; \alpha,  \beta)   \in \fA_3$;

\item[(ii)] if $\alpha  +\beta > 3/2$ and
$$
 \frac{\beta}{\alpha  +\beta - 3/2}  \le \vartheta_\ell 
$$
then we have 
$(2, \ell; \alpha,  \beta)   \in \fA_4$;

\item[(iii)] if $\alpha/2  +\beta > 1$ and
$$
\frac{\beta}{\alpha/2  +\beta - 1}  \le \vartheta_\ell 
$$
then we have 
$(3, \ell; \alpha,  \beta)   \in \fA_4$;

\item[(iv)] if $\beta \ge 1/2$ and
$$
   \frac{\beta}{\beta -1/2}\le \vartheta_\ell 
$$
then we have 
$(4, \ell; \alpha,  \beta)   \in \fA_4$. 
\end{itemize}
\end{theorem}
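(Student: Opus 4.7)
The plan is to combine Greaves' weighted sieve (in the form of~\cite[Section~5]{Gre2}) applied to the almost-prime variable $s$ with bounds for exponential sums over reciprocals of products of primes modulo $m$. Starting from the additive-character orthogonality identity
$$
\mathbf{1}[p_1\cdots p_k s\equiv a \pmod m] = \frac{1}{m}\sum_{u=0}^{m-1}\e_m\(u(p_1\cdots p_k s-a)\),
$$
and attaching to $s$ the weights of Greaves' weighted linear sieve, the $u=0$ contribution forms a positive main term provided the effective level of distribution $D=m^\Delta$ of $s$ in the progression $s\equiv a\,\overline{p_1\cdots p_k}\pmod m$ satisfies $\beta/\Delta\le\vartheta_\ell$; this is exactly the Greaves threshold for producing an element of $\cP_\ell$ in a sequence of density $1/\varphi(m)$ up to $y=m^{\beta+\varepsilon}$.

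For $u\ne 0$ the error contribution, after a dyadic decomposition in $u$ and unfolding the sieve weights, reduces to estimating
$$
S_k(u) = \sum_{p_1,\ldots,p_k\le x}\e_m\bigl(u\,\overline{p_1\cdots p_k}\bigr),
$$
coupled with trivial geometric-series bounds on $\sum_{t\le y/d}\e_m(udt)$. Since the trivial level of distribution $D=m^{\beta-1}$ coming from the sifted progression is itself insufficient when $\beta<1$, the role of the exponential sum estimate is to amplify it: a nontrivial saving of $m^\theta$ in $S_k(u)$ over the trivial bound $\pi(x)^k$ translates, after summation in $u$ and combination with the $s$-sum, into an effective level $\Delta=\beta+\theta-1$, so that the hypothesis $\beta/(\beta+\theta-1)\le\vartheta_\ell$ is precisely Greaves' threshold.

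The four parts correspond to four different bounds on $S_k(u)$. In part (i), $k=1$, one inputs two Karatsuba--Vinogradov type bounds for $\sum_{p\le x}\e_m(u/p)$: one is effective for small $\gcd(u,m)$ and yields $\theta=\alpha/16$, the other covers the complementary range and yields $\theta=\alpha/3-1/4$; both are required to dominate the full range of $u$, which is why the $\max$ of the two quotients appears in the hypothesis. In part (ii), $k=2$, a single Cauchy--Schwarz step in $p_2$ turns $S_2(u)$ into a bilinear form, which the Weil bound on Kloosterman sums handles with $\theta=\alpha-1/2$. In part (iii), $k=3$, a further Cauchy--Schwarz step combined with Weil gives $\theta=\alpha/2$. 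In part (iv), $k=4$, one more Cauchy--Schwarz step reduces $S_4(u)$ to an exponential sum for which Weil's bound yields the absolute saving $\theta=1/2$, uniformly for $\alpha\ge 1/2$.

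The principal technical difficulty will be maintaining these exponential sum estimates uniformly in $u$ as $\gcd(u,m)$ ranges over divisors of $m$: for composite $m$, one must carefully track the reduced modulus $m/\gcd(u,m)$ in the Karatsuba and Weil bounds and verify that the contribution of ``bad'' $u$ with large $\gcd(u,m)$ is absorbed. Once $S_k(u)$ is controlled at this level of uniformity, the remaining step is standard: matching the resulting error term against the Greaves-sieve main-term lower bound, with the sifting level of the weighted sieve chosen in line with the threshold prescribed by $\vartheta_\ell$.
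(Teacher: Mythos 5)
Your overall architecture coincides with the paper's: sift the residues $a\overline{p_1\cdots p_k} \bmod m$ lying in $[1,y]$ with Greaves' weighted sieve, obtain the level of distribution from cancellation in the sums $S_k$ over reciprocals of primes, and impose $\beta/\Delta\le\vartheta_\ell$ with $\Delta=\beta+\theta-1$; your claimed savings $\theta=\alpha/16$ and $\alpha/3-1/4$ for $k=1$, and $\alpha-1/2$, $\alpha/2$, $1/2$ for $k=2,3,4$, reproduce exactly the exponents behind the four parts of the theorem. The framing differences (attaching the sieve weights to $s$ and invoking orthogonality in $u$ directly, versus the paper's computation of the discrepancy $\Delta_k(a;x,y)$ via Erd\H{o}s--Tur\'an and then of the level $D_k$) are cosmetic.

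The genuine gap is in how you propose to prove the bounds on $S_k$ for $k=2,3,4$. You attribute them to ``Cauchy--Schwarz plus the Weil bound for Kloosterman sums,'' but after a Cauchy--Schwarz step the remaining variable is still supported on reciprocals of primes, not on a complete residue system or an interval, so Weil's bound (or its composite-modulus analogue, which is already delicate) has nothing to act on; there is no complete Kloosterman sum in sight, and completing a sum over primes is precisely the hard problem one is trying to avoid. The paper's route is elementary and different: it uses Vinogradov's general bilinear-form bound (Lemma~\ref{lem:BilinSums}), which gives $\sqrt{\varPhi\varPsi m}$ by orthogonality alone, with $\varPhi,\varPsi$ controlled by congruence counting --- for $k=3,4$ one groups $p_1p_2$ (and $p_3p_4$) and uses the multiplicative-energy estimate of Lemma~\ref{lem:EnergyPrime} for the number of solutions of $\ip_1\ip_2\equiv\iq_1\iq_2\pmod m$; this is what yields Lemma~\ref{lem:Sums S} and hence Lemma~\ref{lem:Distr}, with exactly the savings you assert, and it handles $\gcd(u,m)=f$ cleanly by working modulo $m/f$. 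A smaller inaccuracy of the same kind occurs in part (i): the two quotients in the hypothesis do not come from a dichotomy between small and large $\gcd(u,m)$, but from the two terms $x^{15/16}$ and $m^{1/4}x^{2/3}$ that appear simultaneously in the single Fouvry--Shparlinski bound~\eqref{eq:S FS} (for $k=1$ the paper simply quotes~\cite[Lemma~2]{Shp}); the $\max$ arises because the level of distribution is the minimum of the two corresponding quantities. So your numerology and sieve threshold are correct, but the analytic engine you name would not deliver the required estimates; the bilinear orthogonality bound plus the energy count is what actually does.
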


We now consider the special case of $\alpha = \beta$.

\begin{cor}\label{cor:pPk}
For any  integer $\ell\ge 3$:
\begin{itemize}
\item[(i)]
For $\alpha = 16\vartheta_\ell /(17\vartheta_\ell -16)$ we have $(\ell; \alpha,  \alpha)   \in \fA_3$;

\item[(ii)] 
for $\alpha = 3\vartheta_\ell /(4\vartheta_\ell -2)$ 
we have $(2, \ell; \alpha,  \alpha)   \in \fA_4$;

\item[(iii)]
for $\alpha = 2\vartheta_\ell /(3\vartheta_\ell -2)$ 
we have $(3, \ell; \alpha,  \alpha)   \in \fA_4$;

\item[(iv)] 
for $\alpha = \vartheta_\ell /(2\vartheta_\ell -2)$ 
we have $(4, \ell; \alpha,  \alpha)   \in \fA_4$. 
\end{itemize}
\end{cor}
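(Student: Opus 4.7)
The plan is to derive Corollary~\ref{cor:pPk} as a direct specialisation of Theorem~\ref{thm:pPkPl} to the diagonal $\alpha=\beta$. In each of the four cases, the principal hypothesis of the theorem, after setting $\beta=\alpha$, takes the form
\[
\frac{\alpha}{A\alpha-B}\le\vartheta_\ell
\]
for suitable positive rationals $A$ and $B$. Since the left-hand side is a decreasing function of $\alpha$ on its domain of positivity, this inequality is equivalent to a lower bound of the shape $\alpha\ge B\vartheta_\ell/(A\vartheta_\ell-1)$, and saturating the bound will yield the critical value asserted in the corollary. Concretely, I will read off $(A,B)=(2,3/2)$ for case (ii), $(A,B)=(3/2,1)$ for case (iii), and $(A,B)=(1,1/2)$ for case (iv), and then check that the resulting $\alpha$ matches the formula stated in the corresponding part of Corollary~\ref{cor:pPk}.

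Case (i) is slightly more delicate: after substitution the main hypothesis becomes
\[
\max\!\left\{\frac{16\alpha}{17\alpha-16},\ \frac{12\alpha}{16\alpha-15}\right\}\le\vartheta_\ell,
\]
which produces two candidate critical values, namely $\alpha_1=16\vartheta_\ell/(17\vartheta_\ell-16)$ and $\alpha_2=15\vartheta_\ell/(16\vartheta_\ell-12)$. The admissible value of $\alpha$ will be $\max(\alpha_1,\alpha_2)$, and I plan to identify the winner by cross-multiplication: the inequality $\alpha_1\ge\alpha_2$ should reduce to a linear statement in $\vartheta_\ell$ with a trivially true right-hand side (essentially $\vartheta_\ell\ge-c$ for some positive constant $c$). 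If this works out, $\alpha_1$ will be binding and will coincide with the value asserted in the corollary.

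It will then remain to verify the auxiliary hypotheses of Theorem~\ref{thm:pPkPl}, namely $\alpha\ge1/2$ together with the strict preliminary inequalities $\alpha/16+\beta>1$, $\alpha/3+\beta>5/4$, $\alpha+\beta>3/2$, $\alpha/2+\beta>1$, and $\beta\ge1/2$. At each critical value $\alpha=\beta$ selected above these should simplify, after clearing denominators, to inequalities whose truth is independent of $\ell$, so they will be automatic for all $\ell\ge3$. The only genuinely non-mechanical step I anticipate is the comparison $\alpha_1\ge\alpha_2$ in case (i); everything else is routine algebra, and Corollary~\ref{cor:pPk} will then follow by invoking the appropriate part of Theorem~\ref{thm:pPkPl} at $(\alpha,\beta)=(\alpha,\alpha)$.
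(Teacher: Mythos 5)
Your proposal is correct and matches the paper's (implicit) derivation: the corollary is exactly Theorem~\ref{thm:pPkPl} specialised to $\beta=\alpha$, with the critical values obtained by saturating $\frac{\alpha}{A\alpha-B}\le\vartheta_\ell$, and in case (i) the comparison $\alpha_1\ge\alpha_2$ does reduce, after cross-multiplication, to the trivially true inequality $\vartheta_\ell\ge-48$, while the auxiliary strict inequalities (e.g.\ $\alpha>16/17$, $\alpha>15/16$, $\alpha>3/4$, $\alpha>2/3$, $\alpha>1/2$) all collapse to statements independent of $\ell$ as you anticipate. No gaps.
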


%
%
%

\section{Numerical Examples}
\label{sec: num ex}

First, we note that Corollary~\ref{cor:pPk}~(i), taken with $\ell=17$
improves (with respect to $\alpha$) the result from~\cite[Section~4]{Shp}, which we have presented 
in~\eqref{eq: 17 0.997}. However this  reduction  in the value of $\alpha$ is rather minor 
and is ``invisible'' at the level of numerical precision with which we present our results.  

On the other hand, for $k=2,3,4$ our improvements are more significant. 
For example, for $\ell=3$, we derive 
$$(2, 3; 0.905,  0.905), \, (3, 3; 0.864,  0.864),  \, (4, 3; 0.760,  0.760).
  \in \fA_4,$$
 In particular, recalling~\eqref{eq: Impl4-2}, we obtain
 $$
 (5; 0.905),\,  (6; 0.864)  \in \fA_2$$
each of which  improves~\eqref{eq: 6 15/16} and 
we also have 
 $$
(7; 0.760,  0.760) \in \fA_2
$$
which maybe compared with~\eqref{eq: k0 3/4}.

Furthermore, with $\ell = 4$ we obtain  
$$
(4, 4; 0.673,  0.673) \in \fA_4,
$$
and hence
$$
(8; 0.673) \in \fA_2
$$
which  improves~\eqref{eq: k0 3/4}.

We also see from Corollary~\ref{cor:pPk}~(iv) and~\eqref{eq: Impl4-2} that for any $\eta> 0$ 
there is some $k_\eta$ such that 
\begin{equation}
\label{eq: k0 1/2}
(k_\eta; 1/2 + \eta) \in \fA_2, 
\end{equation}
which is yet another improvement of~\eqref{eq: k0 3/4}.

We remark that here we have used the value of $\delta_3$ and  $\delta_4$ 
given by~\eqref{eq:delta234}, that
has been announced by Greaves~\cite[Equation~(1.4)]{Gre1}, 
see also~\cite[pp.~174--175]{Gre2}, however full details  of calculation have 
never been supplied (although there seems to be no reason to doubt the validity 
of these values).  However, even with slightly larger values, as those reported 
in~\cite{Gre0} our approach still leads to improvements of~\eqref{eq: 6 15/16} and~\eqref{eq: k0 3/4}.
  
\section{Notation}

Throughout the paper,  $p$ and $q$ always denote  prime 
numbers, while  $k$, $\ell$, $m$ and $n$ (in both the upper and
lower cases) denote positive integer 
numbers. 

We use $\Z_m$ to denote the residue ring modulo $m$. 

As we have mentioned, for an integer $\ell \ge 1$, we use $\cP_\ell$ to denote the set of 
integers that are products of at most $\ell$ primes. 

As usual, we  use $\pi(x)$ to denote the 
number of primes $p\le x$ and $P(n)$ to denote the
larget prime divisor of $n \ge 2$ (we also set $P(1) = 0$).

We fix a sufficiently large integer $m$ and for any integer $n$ with $\gcd(m,n)=1$  
we denote by $\overline n$ 
the multiplicative inverse of $n$ modulo $m$, that is, the unique integer $u$ defined by
the conditions
$$
nu \equiv 1 \pmod m \mand 1 \le u <m.
$$

We remark that once we write $\overline n$ we automatically assume 
that $\gcd(m,n)=1$. 

The implied constants in the symbols `$O$' and  `$\ll$' 
  may occasionally,
where obvious, depend on the small positive parameter $\eps$.
We recall that the notations $U = O(V)$  and   $U \ll V$  are all
equivalent to the assertion that the inequality $|U|\le cV$ holds for some
constant $c>0$.

Finally, the notation $z\sim Z$ means that $z$ must satisfy the inequality
$Z< z\leq 2Z$.


\section{Exponential sums with   reciprocals of primes}

For an integer $m\ge 2$, we  define the exponential function 
$\em = \exp(2 \pi i z/m)$ consider 
the exponential sums
$$
S_k(a;x) = \ssum_{\substack{p_1,\ldots, p_k  \le x \\ 
\gcd(p_1 \ldots p_k,m)=1}} \em\(a\ip_1 \ldots \ip_k \),\quad k =1.2, \ldots, 
$$
where $x\geq 2$ is a real number and $a$ is an integer.

Note that in~\cite{Shp} only the sums $S_1(a;x)$ have been employed together with 
the following  bound of Fouvry and  Shparlinski~\cite[Theorem~3.1]{FoSh}, 
\begin{equation}
\label{eq:S FS}
\left| S_{m}(a;x)\right| \le 
\( x^{15/16} + m^{1/4}x^{2/3}\) m^{o(1)}
\end{equation} 
uniformly for    $x \le m^{4/3}$ and integers $a$ with $\gcd(a,m)=1$. 

We note that the bound~\eqref{eq:S FS} extends a similar bound of 
Garaev~\cite[Theorem~1.1]{Gar1} from prime to composite moduli.
For convenience, we have dropped the condition
$x \ge m^{3/4}$  from~\cite[Theorem~3.1]{FoSh}
as for smaller values of $x$ the bound is trivial.

Here, since we study a modified question, we also make use of the sums  
$S_k(a;x)$ with $k =2,3,4$. 
 First we need the following simple statement:

\begin{lemma}
\label{lem:EnergyPrime} For any real $x\ge 2$ the number of solutions to the 
congruence
$$ \ip_1 \ip_2 \equiv   \iq_1 \iq_2 \pmod m, \qquad   p_1, p_2,  q_1, q_2 \le x,
$$
is  at most $x^{2+o(1)}\(x^2/m + 1\)$. 
 \end{lemma}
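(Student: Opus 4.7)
The plan is to convert the congruence on inverses into a congruence on $p_1p_2$ and $q_1q_2$, then split according to whether this congruence is actually an equality of integers.

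First I would multiply both sides of $\ip_1\ip_2\equiv\iq_1\iq_2\pmod m$ by $p_1p_2q_1q_2$ (which is coprime to $m$), to see that it is equivalent to counting solutions of
\[
p_1p_2\equiv q_1q_2\pmod m,\qquad p_1,p_2,q_1,q_2\le x.
\]
Write $p_1p_2-q_1q_2=cm$ with $c\in\Z$ and $|c|\le 2x^2/m$. I would then handle $c=0$ and $c\ne0$ separately.

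For $c=0$, unique factorization applied to the equality $p_1p_2=q_1q_2$ of products of two primes forces $\{p_1,p_2\}=\{q_1,q_2\}$ as multisets, so the contribution is at most $2\pi(x)^2=O(x^2)$. For $c\ne0$, I would fix $c$ and $(q_1,q_2)$; then $p_1p_2$ is a determined positive integer of size at most $3x^2$, and the standard divisor bound $\tau(n)\le n^{o(1)}$ gives at most $x^{o(1)}$ factorizations of this integer as a product of two primes. Summing over the $O(x^2)$ choices of $(q_1,q_2)$ and the $O(x^2/m)$ nonzero values of $c$ yields a contribution of $x^{4+o(1)}/m$.

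Adding the two cases gives the claimed bound $x^{2+o(1)}+x^{4+o(1)}/m=x^{2+o(1)}\bigl(x^2/m+1\bigr)$. The argument is essentially routine; the only mild subtlety is making sure the divisor bound is applied to the integer $q_1q_2+cm$ (which may be of size up to $O(x^2)$, hence still contributes only $x^{o(1)}$ to the count), and that the range of $c$ is bounded correctly so as not to overshoot the $x^2/m$ factor. No deep input beyond the divisor bound is needed.
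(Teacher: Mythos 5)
Your argument is correct and essentially identical to the paper's: after clearing inverses, both proofs fix $(q_1,q_2)$ and observe that $p_1p_2$ is confined to at most $x^2/m+1$ residues (your parameter $c$ indexing the same arithmetic progression), with each resulting integer admitting $x^{o(1)}$ (in fact at most $2$) representations as a product of two primes. The $c=0$ versus $c\neq 0$ split is a cosmetic reorganization, not a different method.
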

\begin{proof} Clearly we can rewrite this congruence as 
$$
p_1 p_2 \equiv   q_1 q_2 \not  \equiv 0 \pmod m.
$$
When a pair $(q_1, q_2)$ is chosen (trivially, in at most $x^2$ ways), this puts 
the product $p_1 p_2\le x^2$ in an arithmetic progression of the fork $a + km$ with $k=0, \ldots, K$, 
where $K = \fl{x^/m}$ (and $a \ne 0$). Hence, each out of 
the $K+1 \le x^2/m + 1$ elements
of this progression gives rise to at most $2$ pairs $(p_1, p_2)$.  Therefore, 
the number of such solutions is $2x^{2}\(x^2/m + 1\)$. This concludes the proof.
\end{proof}

Clearly we ignored some possible logarithmic savings in the proof of Lemma~\ref{lem:EnergyPrime} 
which do not affect our main results.

Our bounds rely on the following classical bound on bilinear sums, which dates 
back to Vinogradov~\cite[Chapter~6, Problem~14.a]{Vin} and has reappeared 
 in many forms since then. 

\begin{lemma}
\label{lem:BilinSums} For arbitrary sets $\cU, \cV \subseteq \Z_m$,  complex
numbers $\varphi_u$ and $\psi_v$ with
$$
\sum_{u \in \cU} |\varphi_u|^2  \le \varPhi  \mand
\sum_{v \in \cV}|\psi_v|^2 \le \varPsi,
$$
and an integer $a$ with $\gcd(a,m)=1$, we have
$$
\left|
\sum_{u\in \cU} \sum_{v \in \cV}
\varphi_u \psi_v \em(auv) \right| \le \sqrt{\varPhi\varPsi m }.
$$
\end{lemma}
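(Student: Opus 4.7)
The plan is to prove this via a standard Cauchy--Schwarz + orthogonality argument, which is the textbook route for bilinear sum estimates of Vinogradov type. The inequality is sharp enough that no finer input is needed.

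First, I would apply the Cauchy--Schwarz inequality to the outer sum over $u$, pulling $\varphi_u$ outside and squaring:
$$
\left| \sum_{u \in \cU}\varphi_u \sum_{v \in \cV} \psi_v \em(auv)\right|^2
\le \left(\sum_{u \in \cU} |\varphi_u|^2\right) \sum_{u \in \cU} \left| \sum_{v\in \cV} \psi_v \em(auv)\right|^2
\le \varPhi \sum_{u \in \cU} \left| \sum_{v\in \cV} \psi_v \em(auv)\right|^2.
$$
Next, since the summand is nonnegative, I would extend the sum over $u \in \cU$ to a sum over all residues $u \in \Z_m$. This costs nothing and makes the additive character orthogonality available.

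Then I would expand the square in the standard way, introducing two copies $v_1, v_2 \in \cV$:
$$
\sum_{u\in \Z_m} \left| \sum_{v\in \cV} \psi_v \em(auv)\right|^2
= \sum_{v_1, v_2 \in \cV} \psi_{v_1}\overline{\psi_{v_2}} \sum_{u \in \Z_m} \em\bigl(au(v_1-v_2)\bigr).
$$
Because $\gcd(a,m)=1$, the map $v \mapsto av$ is a bijection of $\Z_m$, and the inner sum equals $m$ when $v_1 \equiv v_2 \pmod m$ and vanishes otherwise. Hence only the diagonal $v_1 = v_2$ survives, contributing $m \sum_{v \in \cV} |\psi_v|^2 \le m\varPsi$.

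Combining, the squared sum is at most $\varPhi \varPsi m$, and taking square roots yields the claimed bound. There is no real obstacle here; the only point requiring a word of care is the use of $\gcd(a,m)=1$ to guarantee that the character $u \mapsto \em(a(v_1-v_2)u)$ is trivial precisely when $v_1 \equiv v_2 \pmod m$, which is exactly the hypothesis of the lemma.
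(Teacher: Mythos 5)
Your proof is correct and is exactly the classical argument behind this lemma: the paper itself gives no proof, citing Vinogradov, and the standard proof being cited is precisely your Cauchy--Schwarz step followed by completing the $u$-sum to all of $\Z_m$ and using orthogonality of additive characters, with $\gcd(a,m)=1$ ensuring only the diagonal $v_1=v_2$ survives. Nothing further is needed.
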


\begin{lemma}
\label{lem:Sums S} For any real $x\ge m^{1/2}$, uniformly over integers $a$ with $\gcd(a,m)=f$, 
we have
\begin{align*}
&S_2(a;x)  \ll  x \(\frac{f x}{m}+1\) (m/f)^{1/2}, \\
&S_3(a;x)  \ll  x^{5/2}  \(\frac{f x}{m}+1\)^{1/2}  ,\\
&S_4(a;x)  \ll x^{4} (m/f)^{-1/2}. 
\end{align*}
\end{lemma}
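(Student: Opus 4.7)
The plan is to reduce each of the three bounds to an application of Lemma~\ref{lem:BilinSums}, after absorbing $f=\gcd(a,m)$ by passing to the modulus $M=m/f$. Writing $a=fa'$ so that $\gcd(a',M)=1$, one has $\em(an)=\eM(a'n)$ for every integer $n$, and since $\gcd(p_i,m)=1$ the residue $\ip_i\bmod m$ reduces to the inverse of $p_i$ modulo $M$. Thus each $S_k(a;x)$ becomes an exponential sum modulo $M$ at an argument coprime to $M$, which is the mechanism that replaces $m$ by $m/f$ in the final bounds.

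For $S_2$, I would take $u=\ip_1$ and $v=\ip_2$ as elements of $\Z_M^\ast$, weighted by the multiplicities $\varphi_u,\psi_v$ coming from distinct primes sharing the same inverse modulo $M$. A fibre of the map $p\mapsto \ip\bmod M$ lies in a single residue class modulo $M$, so trivially $\varphi_u,\psi_v\le x/M+1 = fx/m+1$. Hence
$$\sum_u|\varphi_u|^2,\ \sum_v|\psi_v|^2 \ \le\ (fx/m+1)\,\pi(x),$$
and Lemma~\ref{lem:BilinSums} applied with modulus $M$ delivers the claimed bound for $S_2$.

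For $S_3$ and $S_4$ the idea is to group primes into pairs: take $u=\ip_1\ip_2$ and $v=\ip_3$ for $S_3$, and $u=\ip_1\ip_2$, $v=\ip_3\ip_4$ for $S_4$, regarded as elements of $\Z_M^\ast$. The central input for the pair-weights is Lemma~\ref{lem:EnergyPrime} applied with modulus $M$ in place of $m$, which gives
$$\sum_u|\varphi_u|^2 \ \ll\ x^{2+o(1)}(fx^2/m + 1),$$
and analogously for $\sum_v|\psi_v|^2$ in the $S_4$ case. The hypothesis $x\ge m^{1/2}$ forces $fx^2/m\ge 1$, so the ``$+1$'' can be absorbed into the other factor. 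Feeding these second-moment estimates into Lemma~\ref{lem:BilinSums} with modulus $M$ and simplifying yields the two remaining bounds.

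The computations themselves are routine; the only care needed is in the preliminary reduction from $m$ to $M=m/f$, in verifying that $\gcd(a',M)=1$ so that Lemma~\ref{lem:BilinSums} applies, and in noting that Lemma~\ref{lem:EnergyPrime}, though stated for $m$, is combinatorial enough to apply with any modulus. There is no serious analytic obstacle to overcome.
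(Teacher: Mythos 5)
Your proposal is correct and follows essentially the same route as the paper: after passing to the modulus $m/f$, it applies Lemma~\ref{lem:BilinSums} with the same groupings ($\ip_1\cdot\ip_2$ for $S_2$, $\ip_1\ip_2\cdot\ip_3$ for $S_3$, $\ip_1\ip_2\cdot\ip_3\ip_4$ for $S_4$), bounding the single-prime second moments trivially and the paired ones by Lemma~\ref{lem:EnergyPrime}, with $x\ge m^{1/2}$ absorbing the ``$+1$''. Your treatment of the reduction $a=fa'$, $\gcd(a',m/f)=1$ is merely a more explicit version of what the paper leaves implicit, so there is nothing substantive to add.
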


 \begin{proof} 
%
%
 The bound on $S_2(a;x)$ is instant from 
 Lemma~\ref{lem:BilinSums},  if one uses the trivial bound 
 $$
 \varPhi = \varPsi \le \#\left\{\ip_1 \equiv \ip_2 \pmod {m/f},
 \ p_1, p_2\le x\right\} \le x \(fx/m+1\)
 $$ 
 (note that hereafter $\ip_1$ and  $\ip_2$ are computed modulo $m$ rather than modulo $m/f$
 but this does not affect the argument). 
 
 To estimate $S_3(a;x)$, we group $p_1$ and $p_2$ together, and again use Lemma~\ref{lem:BilinSums} with 
\begin{align*}
 \varPhi & =   \#\left\{ \ip_1 \ip_2 \equiv   \iq_1 \iq_2 \pmod {m/f}, \ p_1, p_2,  q_1, q_2 \le x\right \} \\
 & \ll x^{2} \(\frac{fx^2}{m}+1\)  \ll f m^{-1}x^{4} 
\end{align*}
by Lemma~\ref{lem:EnergyPrime} (where we have also used that $x \ge m^{1/2}$), 
and also, as before, with 
$$
\varPsi \le x \(fx/m+1\).
 $$ 
 
 Finally for $S_4(a;x)$, we group $p_1$ and $p_2$ as well as $p_3$ and $p_4$ together, and again  Lemma~\ref{lem:BilinSums} with 
$$
 \varPhi =  \varPsi   \ll fm^{-1}x^{4} 
$$
which concludes the proof. 
\end{proof}
 
Note that the assumption $x \ge m^{1/2}$ of Lemma~\ref{lem:Sums S}
is used only for the purpose of typographical simplicity of the bounds;
one can obtain more general statements which apply to any $x$. 

Let 
\begin{equation}
\label{eq:def T}
\begin{split}
T_k(a;x,y) = 
\#\{(p_1,\ldots p_k,v)~:~   p_1, & \ldots, p_k \le x, \ v \le y,\\
&  a\ip_1 \ldots \ip_k\equiv v \pmod m\}.
\end{split}
\end{equation} 
We recall our convention that in the definition of $T_k(a;x,y) $ we automatically assume 
that $ \gcd(p_j,m) =1$, $j=1, \ldots, k$.

We also use 
\begin{equation}
\label{eq:def Delta}
\Delta_k(a;x,y) = T_k(a;x,y)- \pi(x)^k y/m
\end{equation} 
to denote the deviation between $T_k(a;x,y) $ 
and its expected value.

\begin{lemma}\label{lem:Distr} For any real $x$  and $y$
and with $m \ge x\ge m^{1/2}$,  $m  \ge y \ge 1$ and integer $a$ 
with $\gcd(a,m)=1$, we have
\begin{align*}
&\Delta_1(a;x,y)  \le \( x^{15/16} + m^{1/4}x^{2/3}\) m^{o(1)},  \\
&\Delta_2(a;x,y)  \le x  m^{1/2+o(1)}, \\
&\Delta_3(a;x,y)  \le  x^{5/2+o(1)}   ,\\
&\Delta_4(a;x,y)  \le x^{4+o(1)} m^{-1/2}. 
\end{align*}
\end{lemma}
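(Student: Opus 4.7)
Plan: The argument is the standard Fourier (additive character) completion. I would first rewrite the congruence condition in the definition~\eqref{eq:def T} of $T_k(a;x,y)$ via the orthogonality relation modulo $m$, obtaining
\begin{equation*}
T_k(a;x,y) = \frac{1}{m} \sum_{h=0}^{m-1} S_k(ah;x)\, \overline{W(h)}, \qquad W(h) = \sum_{v \le y} \em(hv).
\end{equation*}
The $h = 0$ term contributes $\pi(x)^k \fl{y}/m = \pi(x)^k y/m + O(\pi(x)^k/m)$, and the residual $O(\pi(x)^k/m)$ is dominated by each of the target bounds under the hypothesis $x \le m$. Hence, up to a negligible error,
\begin{equation*}
|\Delta_k(a;x,y)| \ll \frac{1}{m} \sum_{h=1}^{m-1} |S_k(ah;x)|\, |W(h)|.
\end{equation*}

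Next I would apply the classical geometric-series estimate $|W(h)| \le \min(y, m/|h|_m)$, where $|h|_m = \min(h, m-h)$, and partition the sum by $f = \gcd(h, m)$. Writing $h = fh'$ with $1 \le h' < m/f$ and $\gcd(h', m/f) = 1$, so that $|h|_m = f|h'|_{m/f}$, a direct calculation yields
\begin{equation*}
\sum_{\substack{1 \le h < m \\ \gcd(h, m) = f}} |W(h)| \ll (m/f)\, m^{o(1)}
\end{equation*}
for each proper divisor $f$ of $m$. Since $\gcd(a, m) = 1$, on each class $\gcd(h, m) = f$ we have $\gcd(ah, m) = f$, so Lemma~\ref{lem:Sums S} yields a pointwise bound $B_k(f)$ on $|S_k(ah; x)|$ for $k = 2, 3, 4$. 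For $k = 1$ the sum $S_1(ah; x)$ descends to modulus $m/f$ after factoring out the common $f$ (with an $O(\log m)$ discrepancy from the coprimality conditions on the primes), so the bound~\eqref{eq:S FS} applied at modulus $m/f$ provides $B_1(f)$. Combining gives
\begin{equation*}
|\Delta_k(a;x,y)| \ll m^{o(1)} \sum_{f \mid m,\, f < m} B_k(f)/f.
\end{equation*}

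The final step is to evaluate this divisor sum. Writing $B_k(f)/f$ as a sum of monomials in $f$ and invoking $\sum_{f \mid m} f^{-s} \le m^{o(1)}$ (which holds for any $s \ge 0$), one obtains two competing contributions: a main term dominated by $f = 1$ and a secondary term from the largest divisors of $m$. Explicitly, this produces $x m^{1/2} + x^2 m^{-1/2}$ for $k = 2$, then $x^{5/2} + x^3 m^{-1/2}$ for $k = 3$, and the single term $x^4 m^{-1/2}$ for $k = 4$, each up to $m^{o(1)}$ factors. The main obstacle I anticipate is verifying that the secondary contributions are absorbed into the stated bound: both $x^2 m^{-1/2} \le x m^{1/2}$ and $x^3 m^{-1/2} \le x^{5/2}$ reduce to $x \le m$, which is part of the hypothesis, so all secondary terms collapse; a parallel argument with~\eqref{eq:S FS} in place of Lemma~\ref{lem:Sums S} handles $k = 1$, giving the stated estimate.
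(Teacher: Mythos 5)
Your proposal is correct and follows essentially the same route as the paper: the paper derives the $k=2,3,4$ bounds from Lemma~\ref{lem:Sums S} by the standard discrepancy technique (Erd\H{o}s--Tur\'an inequality), which amounts to exactly your completion over $h$, splitting by $\gcd(h,m)$ and summing over divisors, and it handles the main-term discrepancy $(\pi(x)-\omega(m))^k$ versus $\pi(x)^k$ just as you implicitly do; for $k=1$ the paper simply cites~\cite[Lemma~2]{Shp}, which your parallel argument with~\eqref{eq:S FS} reproduces (modulo the easily checked restriction $x\le (m/f)^{4/3}$ when descending to modulus $m/f$, with large divisors handled trivially).
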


 \begin{proof} 
 The bound on $\Delta_1(a;x,y)$ is given by~\cite[Lemma~2]{Shp}
 (the fact that in~\cite{Shp} the prime $p$ is from a dyadic interval $[x/2,x]$ is
 inconsequential). 

Using the same standard   technique as in the proof of~\cite[Lemma~2]{Shp},  
in particular,   the Erd{\H o}s--Tur{\'a}n inequality, see~\cite{DrTi,KuNi},
we easily derive the other bounds  from Lemma~\ref{lem:Sums S}.

We remark, that this method gives 
$\(\pi(x) - \omega(m)\)^k y /m$ for the main term, where $\omega(m)$ is 
the number of distinct prime divisors of $m$. Since we trivially have 
$\omega(m) \ll \log m$ this difference gets absorbed in the error term. 
\end{proof}

\section{Sieving}

Here we collect some results of Greaves~\cite{Gre1,Gre2}
which underly our approach. 

Let $\cA =(a_1, \ldots, a_N)$ be a sequence of $N$ integers in 
the interval $[1,Y]$ for some real $Y >1$.
For an integer $d \ge 1$ we define $\cA(d)$ as a subsequence of 
$\cA$ consisting of elements $a_n$ with $d \mid a_n$. 

We say that $\cA$ has a {\it level of distribution $D$\/} if there is a multiplicative
function $\rho(d)$ and some constant $\kappa> 0$ such that for 
$$
r(d) = \left|\# \cA(d) -\rho(d) N\right|
$$
we have 
$$
\sum_{d \le D} r(d) \ll N^{1-\kappa}. 
$$
We remark the condition on the sum of error terms can be 
relaxed a little bit and generally instead of a power saving and logarithmic saving  
is sufficient. 

Given a level of distribution, we also define the {\it degree\/}
$$
g = \frac{\log Y}{\log D}. 
$$

We also recall the definition of the constants $\delta_\ell$ from Section~\ref{sec:main}.

Then in the above notation  by~\cite[Proposition~1, Chapter~5]{Gre2} we have:

\begin{lemma}\label{lem:Sieve} If for some integer $\ell\ge 2$ we have 
$$
g < \ell -\delta_\ell
$$
then for some element $a_n$ of $\cA$ we have $a_n \in \cP_\ell$. 
\end{lemma}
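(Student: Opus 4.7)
The plan is to follow the weighted sieve approach of Greaves in \cite[Chapter~5]{Gre2}. The strategy is to build weights $w_n$ supported on $\cA$ such that $w_n \le 0$ whenever $a_n$ has more than $\ell$ prime factors, and then show that $\sum_n w_n > 0$; any single $n$ contributing positively to this sum necessarily yields $a_n \in \cP_\ell$.

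First, I would construct Greaves's sieve weights. They take the schematic form
$$
w_n = \Lambda^{-}(a_n) - \sum_{p \in \cI} \omega_p\, \Lambda^{(p)}(a_n/p),
$$
where $\Lambda^{-}$ is a Rosser--Iwaniec lower-bound sieve at level $D^{1/2}$, the set $\cI$ consists of ``medium-sized'' primes whose range is to be optimized, and the scalars $\omega_p$ are carefully tuned decreasing weights (it is precisely this tuning which produces the constants $\delta_\ell$). A combinatorial counting argument then shows that if $a_n \le Y$ has more than $\ell$ prime factors, the negative contribution of the second sum dominates the first, and so $w_n \le 0$, provided the threshold $g = \log Y / \log D < \ell - \delta_\ell$ is in force.

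Second, I would evaluate $\sum_n w_n$ by using the level of distribution hypothesis. Both pieces of $w_n$ expand into finite linear combinations of the counts $\#\cA(d)$ with $d \le D$. Substituting $\#\cA(d) = \rho(d)N + r'(d)$ with $|r'(d)| \le r(d)$, the error contribution is dominated by $\sum_{d\le D} r(d) \ll N^{1-\kappa}$, which is negligible. The main term reduces to $N V(D) \Phi(g)$, where $V(D) = \prod_{p \le D}\bigl(1-\rho(p)\bigr)$ and $\Phi$ is an explicit continuous sieve function arising from Greaves's analysis. The inequality $g < \ell - \delta_\ell$ is exactly the condition which forces $\Phi(g) > 0$, so $\sum_n w_n > 0$ for $N$ large, and at least one positively weighted $a_n$ lies in $\cP_\ell$.

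The main obstacle is proving that the specific constants $\delta_\ell$ in \eqref{eq:delta234} are sharp enough to make this scheme work. This depends on a delicate Rosser-type iteration and an optimization of the cutoffs defining $\cI$, of the weights $\omega_p$, and of the differential-difference equations controlling $\Phi$. That optimization is the technical heart of \cite{Gre1,Gre2}, and rather than reproducing it I would simply invoke \cite[Proposition~1, Chapter~5]{Gre2} as the formal source of the lemma.
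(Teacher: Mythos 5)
Your proposal is correct and ends up exactly where the paper does: the paper offers no proof of this lemma at all, simply quoting it as \cite[Proposition~1, Chapter~5]{Gre2}, and you likewise defer the technical heart (the construction and optimization of the weights producing $\delta_\ell$) to that same proposition. Your preliminary sketch of Greaves's weighted-sieve mechanism is a reasonable gloss but plays no logical role beyond the citation, so the two "proofs" are essentially identical.
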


\section{Proof of Theorem~\ref{thm:pPkPl}}

%
%
%
%

 We fix $a$ with $\gcd(a,m)=1$ and for an integer $k\ge1$, consider the 
sequence $\cA_k$ consisting of  the smallest nonnegative residues
$$
v \equiv a\ip_1 \ldots \ip_k \pmod m
$$ 
for $p_1, \ldots, p_k \le x$ and such that 
these residues  satisfy $v \le y$.
In particular $\# \cA_k = T_k(a;x,y)$ as defined by~\eqref{eq:def T}.

As usual, for an integer $d\ge 1$ we denote by $\cA_k(d)$ the number of 
$v \in \cA_k$ with $d \mid v$.
Clearly $\#\cA_k(d)$ is number of solutions to the
congruence 
$$
a\ip_1 \ldots \ip_k   \equiv d u \pmod m, \qquad p_1, \ldots, p_k \le x, \ u \le y/d.
$$
Thus $\#\cA_k(d)  = 0$ if $\gcd(d,m)> 1$. 
Otherwise, that is, for $\gcd(d,m) = 1$, we have 
$$
\left| \#\cA_k(d) - \frac{\pi(x)^k  y}{dm}\right| \le  \Delta_k(a;x,y), 
$$
where $\Delta_k(a;x,y)$ is defined by~\eqref{eq:def Delta}. 

We now fix some sufficiently small $\varepsilon>0$.

Using  Lemma~\ref{lem:Distr}  we see that the levels of distribution $D_k$
of $\cA_k$, $k=1,2,3,4$, satisfy 
\begin{align*}
&D_1  \ge \min\{x^{1/16}ym^{-1-\varepsilon},  x^{1/3}ym^{-5/4-\varepsilon}\} \\
&D_2 \ge xy  m^{-3/2-\varepsilon}, \\
&D_3\ge  x^{1/2}y  m^{-1-\varepsilon}  ,\\
&D_4 \ge  y m^{-1/2-\varepsilon}, 
\end{align*}
provided that $m$ is large enough.

Since all elements of the sequences $\cA_k$ are in the interval $[1,y]$, their 
 degree satisfies 
$$
g_k \le  \frac{\log y}{\log D_k} . 
$$
Hence, for $x = m^{\alpha+\varepsilon}$ and $y = m^{\beta+\varepsilon}$ with   $1\ge \alpha \ge 1/2$ and $1 \ge \beta \ge 0$ 
we have 
\begin{align*}
&g_1  \le
\max\left\{\frac{\beta+\varepsilon}{\alpha/16 +\beta - 1}, 
\frac{\beta+\varepsilon}{\alpha/3 +\beta  - 5/4} \right\},
\\
&g_2 \le \frac{\beta+\varepsilon}{\alpha  +\beta - 3/2 }, \\
&g_3\le   \frac{\beta+\varepsilon}{\alpha/2  +\beta - 1}, \\
&g_4 \le   \frac{\beta+\varepsilon}{\beta - 1/2}, 
\end{align*}
provided that the denominators are positive. 
Recalling Lemma~\ref{lem:Sieve}, we conclude the proof. 

\section{Comments}

%
%
%
%
We remark that Theorem~\ref{thm:pPkPl} resembles results about 
the distribution of elements of $\cP_\ell$ in arithmetics 
progressions, see, for example, ~\cite[Theorem~25.8]{FrIw}. However, 
these results seem to be 
 completely independent and do not imply each other. 
 For example, the elements of $\cP_2$ produced by~\cite[Theorem~25.8]{FrIw}
 cannot be ruled out to be prime, and they are also relatively large compared 
 to the modulus).


\section*{Acknowledgement}

The author is grateful to John Friedlander for enlightening 
discussions on sieves and comments on an earlier version of this paper.   

Part of this work was also done when
the authors was visiting 
the Max Planck Institute for Mathematics, Bonn, and Fields Institute, Toronto, 
whose  generous 
support and hospitality are gratefully acknowledged. 

This work was also partially supported  by  
by the Australian Research Council Grant~DP140100118.

\end{document}